\newtheorem{theorem}{Theorem}
\newtheorem{lemma}[theorem]{Lemma}
\newtheorem{remark}[theorem]{Remark}
\pgfplotsset{compat=newest}
\newcommand{\E}[1]{\mathbb{E}\left\{ #1 \right\} }
\begin{document}

\title{Remote State Estimation with Stochastic Event-triggered Sensor Schedule in the Presence of Packet Drops}

\author{ Liang Xu, Yilin Mo and Lihua Xie \thanks{ Liang Xu and Lihua Xie are with the School of Electrical and Electronic Engineering, Nanyang Technological University, Singapore (e-mail: lxu006@e.ntu.edu.sg, elhxie@ntu.edu.sg)}
\thanks{ Yilin Mo is with the Department of Automation, Tsinghua University, Beijing, China (e-mail: ylmo@tsinghua.edu.cn)}}

\maketitle

\begin{abstract}
    This paper studies the remote state estimation problem of linear time-invariant systems with stochastic event-triggered sensor schedules in the presence of packet drops between the sensor and the estimator. It is shown that the system state conditioned on the available information at the estimator side is Gaussian mixture distributed. Minimum mean square error (MMSE) estimators are subsequently derived for both open-loop and closed-loop schedules. Since the optimal estimators require exponentially increasing computation and memory, sub-optimal estimators to reduce the computational complexities are further provided. In the end, simulations are conducted to illustrate the performance of the optimal and sub-optimal estimators.
\end{abstract}

\section{Introduction}

Sensor networks have wide applications in environment and habitat monitoring, industrial automation, smart buildings, etc. In many applications, sensors are battery powered and are required to reduce the energy consumption to prolong their service life. Sensor scheduling algorithms are therefore proposed as an efficient method by scheduled transmissions to reduce the communication frequency to prolong the service time of sensor devices. Sensor scheduling algorithms can be roughly categorized as off-line schedules and event-triggered schedules. The off-line schedules are designed based on the communication frequency requirement and the statistics of the systems~\cite{YangChao2011TSP,ShiLing2011Auto,MoYilin2014SCL}. Compared with off-line schedules, event-triggered schedules depend on both the statistics and the realization of the system, which are expected to achieve better performance than off-line ones. Many triggering rules have been proposed in the literature based on the condition that, the estimation error~\cite{XiaMeng2017TAC}, error in predicated output~\cite{TrimpeSebastian2014CDC}, functions of the estimation error~\cite{WuJunfeng2013TAC,HanDuo2015TAC}, or the error covariance~\cite{TrimpeSebastian2014TAC}, exceeds a given threshold.

Wireless communications are mostly utilized in sensor networks, and packet drops are inevitable in wireless communications. Therefore, it is necessary to study how packet drops affect sensor scheduling algorithms~\cite{LeongAlexS2017TAC, MoYilin2014SCL}. It should be noted that, for off-line schedulers and estimation error covariance based event-triggered schedulers, there is no need to distinguish between the channel loss event and the hold of transmission event when designing estimators. As long as the estimator receives the packet, it can conduct the measurement update to improve the estimate and vice versa. However, the case is different for the event-triggered sensor scheduling algorithms in~\cite{WuJunfeng2013TAC, HanDuo2015TAC} where the sensor measurement is used as the trigger criterion and the hold of transmission event contains information about the sensor measurement. In the presence of possible channel losses, the estimator cannot decide whether the non-reception of the packet can be attributed to the sensor measurement or the channel loss. If it is due to that the sensor measurement lies below the given threshold, then this information can be leveraged to improve the estimate. However, if it is caused by the channel loss, the estimator will have no information about the sensor measurement and no update will be carried out. This fact complicates the optimal estimator design. Furthermore, it is proved that, in the presence of channel losses, the Gaussian properties with the stochastic event-triggered sensor scheduling algorithms in~\cite{HanDuo2015TAC} no longer hold~\cite{KungEnoch2017ACC}. 

This paper considers the same problem setting as in~\cite{HanDuo2015TAC} with the additional consideration of the presence of packet drops between the sensor and the estimator. We try to derive the MMSE estimator in the case that the estimator has no knowledge about the channel loss events and only knows the channel loss rate. We show that the conditional distributions of the system state at the estimator side are mixture Gaussian, based on which MMSE estimators are derived. Moreover, sub-optimal estimation algorithms to reduce computational complexities are provided. This paper is organized as follows. The problem formulation is given in Section~\ref{sec:ProblemFormulation}. The optimal estimators for the open loop scheduler case and the closed-loop scheduler case are studied in Section~\ref{sec:OpenLoopEstimator} and Section~\ref{sec:ClosedLoopEstimator}, respectively. Strategies to reduce the computational complexities are discussed in Section~\ref{sec:ReduceComputeComplexity}. Simulations evaluations are given in Section~\ref{sec:simulations}. This paper ends with some concluding remarks in Section~\ref{sec:Conclusion}.   

\textit{Notation}: $\mathcal{N}_x(\bar{x}, \Sigma)$ denotes the Gaussian pdf of the random variable $x$ with the mean $\bar{x}$ and the covariance matrix $\Sigma$. $f(x)$ ($\mathrm{Pr}(x)$) denotes the probability density function (probability) of the random variable $X$.
$f(x|y)$ ($\mathrm{Pr}(x|y)$) denotes the probability density function (probability) of the random variable $X$ conditioned on the event that $Y=y$.    $\mathbb{E}\{\cdot\}$ denotes the expectation operator.   $A'$, $A^{-1}$ and $|A|$ are the transpose, the inverse and the determinant of matrix $A$, respectively. The term $x'Ax$ for the symmetric matrix $A$ and vector $x$ is abbreviated as $x'A(*)$.


\section{Problem Formulation\label{sec:ProblemFormulation}}

In this paper, we are interested in the following linear dynamic system 
\begin{align*}
  x_{k+1}&=Ax_k+w_k,\\
  y_k&=Cx_k+v_k,
\end{align*}
where $x_k\in \mathbb{R}^n$, $y_k \in \mathbb{R}^m$ are the state and output; $w_k$ and $v_k$ are the process and measurement noises. We assume that $\{w_k\}_{k\ge 0}$ and $\{v_k\}_{k\ge 0}$ are white Gaussian processes with zero mean and covariance matrices $Q$ and $R$, respectively. Moreover, the initial system state satisfies $x_0\sim \mathcal{N}_{x_0}(0, \Sigma_0)$ and is independent with $w_k$ and $v_k$.

We consider the remote estimation problem where the sensor output is transmitted to the estimator through a wireless network. To reduce the communication frequency, after measuring $y_k$, the sensor follows the stochastic sensor scheduling algorithm~\cite{HanDuo2015TAC} to decide whether to transmit $y_k$ to the estimator or not. Let $s_k$ denote the decision variable by the sensor.  When $s_k=1$, the sensor transmits $y_k$ to the estimator and $s_k=0$, otherwise. We assume that the communication channel between the sensor and the estimator is a memoryless erasure channel. Let us define independent Bernoulli random variables $\gamma_k$s such that $\gamma_k=1$ if the channel is in the good state at time $k$ and $\gamma_k=0$ if otherwise. Hence, the estimator can only receive $y_k$ when both $s_k=1$ and $\gamma_k=1$. We further assume that $\Pr(\gamma_k=0)=p$ and the estimator can distinguish whether a packet arrives or not. However, in case of no packet arrival, the estimator cannot decides whether it is due to channel loss or the inactivity of the event-trigger. The following information is available to the estimator at time $k$
\begin{align}
 \mathcal{I}_k=\{s_0\gamma_0, \ldots, s_k\gamma_k, s_0\gamma_0y_0, \ldots, s_k\gamma_k y_k \} 
\end{align}
with $\mathcal{I}_{-1}=\emptyset$. The following notions are defined first and will be used in subsequent analysis. 
\begin{align*}
  \hat{x}_{k|k}&=\E{x_k|\mathcal{I}_k},& \hat{x}_{k|k-1}&=\E{x_k|\mathcal{I}_{k-1}},\\
  \hat{y}_{k|k}&=\E{y_k|\mathcal{I}_k},& \hat{y}_{k|k-1}&=\E{y_k|\mathcal{I}_{k-1}}, \\
  e_{k|k}&=x_k-\hat{x}_{k|k}, & e_{k|k-1}&= x_k-\hat{x}_{k|k-1},\\
  P_{k|k}&=\mathbb{E}\{e_{k|k}e_{k|k}'\}, & P_{k|k-1}&=\mathbb{E}\{e_{k|k-1}e_{k|k-1}'\}.
\end{align*}

The stochastic sensor scheduling algorithm~\cite{HanDuo2015TAC} operates as below. At the time $k$, the sensor randomly generates a variable $\zeta_k$ from a uniform distribution on $[0, 1]$. Then  $\zeta_k$ is compared with a function $\phi(y_k, \hat{y}_{k|k-1})$, where $\phi(y_k, \hat{y}_{k|k-1}):\mathbb{R}^m\times \mathbb{R}^m\rightarrow [0,1]$. The sensor schedules transmissions based on the following rule
\begin{align}\label{eq.StochasticSensorScheduling}
 s_k= \begin{cases}
      0, \quad \textrm{if}\; \zeta_k \le \phi(y_k, \hat{y}_{k|k-1}),  \\
      1, \quad \textrm{if}\; \zeta_k > \phi(y_k, \hat{y}_{k|k-1}).
  \end{cases}
\end{align}

Two stochastic sensor scheduling algorithms are proposed in~\cite{HanDuo2015TAC}. The first one is the open-loop scheduler, in which $\phi(y_k, \hat{y}_{k-1})=e^{-y_k'Yy_k}$ with $Y>0$. The open-loop scheduler uses the current measurement $y_k$ only to schedule the transmission. Another scheduler is the closed-loop scheduler, in which $\phi(y_k, \hat{y}_{k-1})=e^{-z_k'Zz_k}$ with $Z>0$ and $z_k=y_k-\hat{y}_{k|k-1}$. The closed-loop scheduler relies on the feedback information $\hat{y}_{k|k-1}$ to schedule the transmission. The diagram of the system is shown in Fig~\ref{fig:SystemDiagram}.
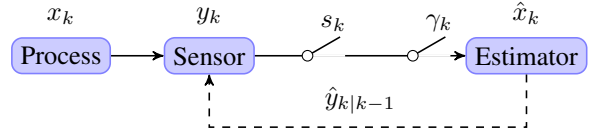
\begin{figure}[ht]
  \centering
  \begin{tikzpicture}[>=stealth', box/.style={rectangle, draw=blue!50,fill=blue!20,rounded corners, semithick}, mycircle/.style={circle, draw,inner sep=1pt}, point/.style={coordinate}] \matrix[row sep = 7mm, column sep = 7mm]{
      \node (process) [box] {Process}; &
      \node (sensor) [box] {Sensor}; &
      \node (p13) [point] {}; &
      &
      \node (p15) [point] {}; & 
      \node (estimator) [box] {Estimator};\\
      &
      \node (p22) [point] {}; &
      &
      \node (p24) [point] {};&
      &
      \node (p26) [point] {};\\
    };
    \draw[semithick,->] (process)--(sensor);
    \draw[semithick,->] (sensor)--(estimator);
    \draw[semithick,->, dashed] (estimator)--(p26)--(p22)--(sensor);

    \node (state) [above=0.05cm of process] {$x_k$};
    \node (measurement) [above=0.05cm of sensor] {$y_k$};
    \node (estimate) [above=0.05cm of estimator] {$\hat{x}_{k}$};
    \node (feedback) [above=0.05cm of p24] {$\hat{y}_{k|k-1}$};
    
    \draw[thick,white](p13)--+(00:0.5cm);
    \draw[semithick](p13)--+(30:0.5cm);
    \node  (schedulerIndicator) [above right=0.2cm and 0.05cm of p13] {$s_k$};
    \draw[fill=white](p13) circle (2pt);

    \draw[thick,white](p15)--+(00:0.5cm);
    \node  (schedulerIndicator) [above right=0.2cm and 0.05cm of p15] {$\gamma_k$};
    \draw[semithick](p15)--+(30:0.5cm);
    \draw[fill=white](p15) circle (2pt);

  \end{tikzpicture}
  \caption{Remote state estimation with stochastic sensor scheduler and packet drops
    \label{fig:SystemDiagram} }
\end{figure}

In subsequent sections, we will show that in the presence of channel losses, the distribution of $x_k$ conditioned on $\mathcal{I}_k$ is mixture Gaussian with an exponentially increasing number of components. Moreover, MMSE estimators are derived from the expectation of the mixture Gaussian distribution.

\begin{remark}  There are several reasons for not
implementing the Kalman filter at the sensor side. The first reason is that the sensor
might be primitive~\cite{HanDuo2015TAC}, so it does not have a sufficient computation capability to
run a local Kalman filter. Secondly, the system parameters might not
be available to the sensor. Thirdly, in decentralized settings
where there are multiple sensors measuring the same process, only the
fusion center which has access to all the sensor measurements can run
the Kalman filter. In the end, the state dimension might be larger
than the output dimension. Therefore, it reduces the communication
cost to transmit the sensor output and perform the Kalman filter at
the estimator side.
\end{remark}

\begin{remark}
    The closed-loop scheduler assumes perfect feedback channels and
    this is possible if the estimator has a much larger transmitting
    power than the sensor. A good example is the communication with a
    satellite. The power in the ground-to-satellite direction can be
    much larger than that in the reverse direction, so the first link can
    be considered as a (essentially) noiseless link.
\end{remark}

\section{Optimal Open-Loop Estimator\label{sec:OpenLoopEstimator}}

In this section, we assume that the open-loop stochastic sensor scheduler is applied and try to derive the MMSE estimator. First of all, the following notions are defined. For any given $i\in \mathbb{N}$ and $k\in \mathbb{N}$, let
\begin{align*}
  i_k^-=
  \begin{cases}
    i, & \textrm{if}\; i<2^k,\\
    i-2^k,& \textrm{if}\; i\ge 2^k.
  \end{cases}
\end{align*}
For any given $k\in \mathbb{N}$ and $i\in \mathbb{N}$ with  $0\le i\le 2^{k+1}-1$, define the event 
\begin{align*}
  \Gamma_{k}^i=\{(\gamma_0, \ldots, \gamma_k)=(b_0, \ldots , b_k)\}, 
\end{align*}
where $b_k$ is the $(k+1)$-th element of the binary expansion of $i$, i.e., $i=b_k2^{k}+b_{k-1}2^{k-1}+\ldots +b_0 2^0$. {
Therefore, $\Gamma_k^i$ denotes a packet drop sequence $\{\gamma_0, \ldots, \gamma_k\}$ specified by the index $i$. $i_k^{-}$ is the index of the sub-sequence $\{\gamma_0, \ldots, \gamma_{k-1}\}$ extracted from the sequence $\{\gamma_0, \ldots, \gamma_k\}$ specified by the index $i$.}

We shall prove that the posterior distribution of $x_k$ can be written as follows:
\begin{align}
  f(x_k|\mathcal{I}_k)&=\sum_{i=0}^{2^{k+1}-1}f(x_k|\Gamma_k^i, \mathcal{I}_k)\Pr(\Gamma_k^i|\mathcal{I}_k), \label{eq.estimatePDF}\\
  f(x_k|\mathcal{I}_{k-1})&=\sum_{i=0}^{2^{k}-1}f(x_k|\Gamma_{k-1}^i, \mathcal{I}_{k-1})\Pr(\Gamma_{k-1}^i|\mathcal{I}_{k-1}) .\label{eq.predictPDF}
\end{align}
One can interpret $\Gamma_k^i$ as a possible realization of the channel loss sequence up to time $k$, and $f(x_k|\Gamma_k^i, \mathcal{I}_k)$ is the pdf of $x_k$ assuming that we know the channel loss sequence is indeed $\Gamma_k^i$, which we shall prove later is indeed Gaussian. Moreover, $\Pr(\Gamma_k^i|\mathcal{I}_k)$ represents the estimator's estimate of the likelihood of channel loss sequence $\Gamma_k^i$ with the available information $\mathcal{I}_k$. Therefore, $f(x_k|\mathcal{I}_k)$ is the pdf of a Gaussian mixture. In the sequel, we will derive the expressions for  $f(x_k|\Gamma_k^i, \mathcal{I}_k)$ and $\Pr(\Gamma_k^i|\mathcal{I}_k)$. Then in view of~\eqref{eq.estimatePDF} and~\eqref{eq.predictPDF}, the optimal estimator can be obtained. 

The following results are required and are presented first. 
\begin{align}
  \label{eq:14}
  f(x_k|\Gamma_k^i, \mathcal{I}_{k-1})=      f(x_k|\Gamma_{k-1}^{i_k^-}, \mathcal{I}_{k-1}),
\end{align}
since only knowing $\gamma_k$ without knowing $y_k$ cannot help to improve the knowledge about $x_k$. 
\begin{lemma} \label{lem:OracleStatePDF}
  \begin{align*}
    f(x_k|\Gamma_k^i, \mathcal{I}_k)&=\mathcal{N}_{x_k} (m_{k|k}^i, P_{k|k}^i), \quad 0\le i \le 2^{k+1}-1\\
    f(x_k|\Gamma_{k-1}^i, \mathcal{I}_{k-1})&= \mathcal{N}_{x_k} (m_{k|k-1}^i, P_{k|k-1}^i), \quad 0\le i \le 2^{k}-1
  \end{align*}
  where $m_{k|k}^i, P_{k|k}^i, m_{k|k-1}^i, P_{k|k-1}^i$ satisfy the following recursive equations. 

  \textit{Time Update}:
  \begin{gather*}
    m_{k+1|k}^i=  Am_{k|k}^i, P_{k+1|k}^i= AP_{k|k}^iA'+Q
  \end{gather*}

  \textit{Measurement Update}:
  \begin{itemize}
  \item  For $i<2^k$,
    \begin{gather}
      m_{k|k}^i=m_{k|k-1}^i, P_{k|k}^i=P_{k|k-1}^i \label{eq:CondStatMeaUpPacketLoss2}
    \end{gather}

  \item For $i\ge 2^k$, 
  \end{itemize}
  \begin{gather}
    m_{k|k}^i= (I-K_k^{i_k^-}C)m_{k|k-1}^{i_k^-}+s_k\gamma_kK_k^{i_k^-}y_k\label{eq:CondStatMeaUpPacketReceive1}\\
    P_{k|k}^i=P_{k|k-1}^{i_k^-}-K_k^{i_k^-}CP_{k|k-1}^{i_k^-}\label{eq:CondStatMeaUpPacketReceive2}\\
    K_k^{i_k^-}=P_{k|k-1}^{i_k^-}C'[CP_{k|k-1}^{i_k^-}C'+R+(1-s_k\gamma_k)Y^{-1}]^{-1}\label{eq:CondStatMeaUpPacketReceive3}
  \end{gather}
  with  initial conditions $m_{0|-1}^0= 0, P_{0|-1}^0= \Sigma_0$.

\end{lemma}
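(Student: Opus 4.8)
The plan is to establish the two displayed identities simultaneously by induction on $k$, with inductive hypothesis that every prediction density $f(x_k\mid\Gamma_{k-1}^{j},\mathcal I_{k-1})$, $0\le j\le 2^{k}-1$, equals $\mathcal N_{x_k}(m_{k|k-1}^{j},P_{k|k-1}^{j})$. The base case is $f(x_0\mid\mathcal I_{-1})=f(x_0)=\mathcal N_{x_0}(0,\Sigma_0)$, which matches $m_{0|-1}^{0}=0$ and $P_{0|-1}^{0}=\Sigma_0$. The conceptual point driving the whole argument is that conditioning on the full channel realization $\Gamma_k^i$ removes the ambiguity between a channel erasure and a scheduler-induced silence: the top bit $b_k$ of $i$ fixes the value of $\gamma_k$, while $i_k^{-}$ is precisely the index of the parent sequence $\{\gamma_0,\dots,\gamma_{k-1}\}$. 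Because $\gamma_k$ is independent of the state and of all noises, and because, given $\gamma_k$, the decision $s_k$ depends only on $y_k$ and the exogenous $\zeta_k$, the Bayesian recursion stays inside the Gaussian family indexed by $i$.

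For the time update I would use that, conditioned on $(\Gamma_{k-1}^{i},\mathcal I_{k-1})$, the noise $w_{k-1}$ is independent of the conditioning information, so $x_k=Ax_{k-1}+w_{k-1}$ maps $\mathcal N_{x_{k-1}}(m_{k-1|k-1}^{i},P_{k-1|k-1}^{i})$ to $\mathcal N_{x_k}(Am_{k-1|k-1}^{i},\,AP_{k-1|k-1}^{i}A'+Q)$, giving the stated prediction recursion. The measurement update then splits on the top bit of $i$. If $i<2^{k}$, i.e.\ $\gamma_k=0$, the observation $s_k\gamma_k=0$ carries no information about $y_k$, so by~\eqref{eq:14} the filtered density equals the prediction, $f(x_k\mid\Gamma_k^{i},\mathcal I_k)=f(x_k\mid\Gamma_{k-1}^{i_k^{-}},\mathcal I_{k-1})$ (note $i_k^{-}=i$ in this range), which is exactly~\eqref{eq:CondStatMeaUpPacketLoss2}. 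If $i\ge 2^{k}$ and in addition $s_k\gamma_k=1$, a genuine reception has occurred, $y_k=Cx_k+v_k$ is observed, and~\eqref{eq:CondStatMeaUpPacketReceive1}--\eqref{eq:CondStatMeaUpPacketReceive3} follow from the classical Gaussian conditioning formula with the $(1-s_k\gamma_k)Y^{-1}$ term vanishing.

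The substantive case, which I expect to be the main obstacle, is $i\ge 2^{k}$ together with $s_k\gamma_k=0$: the channel is good yet nothing arrives, so the estimator learns that the scheduler fired, i.e.\ $\zeta_k\le e^{-y_k'Yy_k}$. Here I would apply Bayes' rule to the prediction $\mathcal N_{x_k}(m_{k|k-1}^{i_k^{-}},P_{k|k-1}^{i_k^{-}})$ with the likelihood $\Pr(s_k=0\mid x_k)=\int e^{-y_k'Yy_k}\,\mathcal N_{y_k}(Cx_k,R)\,dy_k$, obtained by marginalizing out the latent $y_k$. Writing $e^{-y_k'Yy_k}$ as an unnormalized $\mathcal N_{y_k}(0,\cdot)$ kernel, completing the square in $y_k$, and evaluating this Gaussian integral shows that, up to an $x_k$-independent constant, the likelihood is proportional to $e^{-\tfrac12 (Cx_k)'(R+Y^{-1})^{-1}(Cx_k)}$; the key algebraic step is the Woodbury-type identity $R^{-1}-R^{-1}(R^{-1}+M)^{-1}R^{-1}=(R+M^{-1})^{-1}$, with $M$ the precision of the kernel, which converts the marginalized precision into the inflated covariance $R+Y^{-1}$ (the exact constant being dictated by the normalization of $\phi$). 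Recognizing this factor as a pseudo-measurement of value $0$ through $C$ with noise covariance $R+Y^{-1}$, the product of the two Gaussians is again Gaussian, and its information-form update reproduces~\eqref{eq:CondStatMeaUpPacketReceive1}--\eqref{eq:CondStatMeaUpPacketReceive3}, now with the $Y^{-1}$ term present and the measurement contribution $s_k\gamma_k y_k=0$. The technical care lies in tracking the normalization through the Gaussian integral and in verifying the covariance identity; once done, the two subcases $s_k\gamma_k\in\{0,1\}$ are unified by the single factor $(1-s_k\gamma_k)Y^{-1}$, which closes the induction.
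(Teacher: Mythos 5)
Your proof is correct and takes essentially the same approach as the paper's: condition on the full channel realization so that the top bit of $i$ fixes $\gamma_k$, handle $i<2^k$ as an uninformative update via \eqref{eq:14}, and for $i\ge 2^k$ apply the event-triggered measurement update, unified by the $(1-s_k\gamma_k)Y^{-1}$ term. The only difference is completeness, not route: the paper's proof simply cites the no-drop update rules of~\cite{HanDuo2015TAC} for the $i\ge 2^k$ case (noting $s_k=\gamma_k s_k$ there), whereas you carry out that Gaussian-marginalization/pseudo-measurement derivation explicitly.
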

\begin{proof} The proof of the initialization and the time-update is straightforward. The measurement update \eqref{eq:CondStatMeaUpPacketLoss2} follows from the  fact that for $i<2^k$, we have $\gamma_k=0$. Therefore, no new information is available and the measurement update is not needed. The measurement updates \eqref{eq:CondStatMeaUpPacketReceive1}, \eqref{eq:CondStatMeaUpPacketReceive2}, \eqref{eq:CondStatMeaUpPacketReceive3} follow from the fact that for $i\ge 2^k$, we have $\gamma_k=1$. Therefore, the measurement update is the same as~\cite{HanDuo2015TAC}. It should be noted that in the case $i\ge 2^k$, $s_k=\gamma_ks_k$.
\end{proof}

Next we calculate the probabilities of 
\begin{align*}
  \alpha_{k|k-1}^i=\Pr(\Gamma_k^i|\mathcal{I}_{k-1}), \alpha_{k|k}^i=\Pr(\Gamma_k^i|\mathcal{I}_{k}).
\end{align*}

\begin{lemma}\label{lem:LossProcessEstimate} $\alpha_{k|k-1}^i$ and $\alpha_{k|k}^i$ with $0\le i\le 2^{k+1}-1$ satisfy the following recursive equation.

  \textit{Time Update}:
  \begin{itemize}
  \item For $i< 2^k$,
    \begin{align}
      \label{eq:LossProcessTimeUpdate2}
      \alpha_{k|k-1}^i= p\alpha_{k-1|k-1}^i.
    \end{align}

  \item For $i\ge 2^k$, 
    \begin{align}
      \label{eq:LossProcessTimeUpdate1}
      \alpha_{k|k-1}^i =(1-p)\alpha_{k-1|k-1}^{i_k^-}.
    \end{align}
  \end{itemize}

  \textit{Measurement Update}:

  \begin{align*}
    \alpha_{k|k}^i= \frac{\Pr(s_k\gamma_k|\Gamma_k^i, \mathcal{I}_{k-1})\alpha_{k|k-1}^i}{\sum_{j=0}^{2^{k+1}-1} \Pr(s_k\gamma_k|\Gamma_{k}^j, \mathcal{I}_{k-1}) \alpha_{k|k-1}^j},
  \end{align*}
  where
  \begin{itemize}
  \item For $j< 2^k$,
    \begin{align*}
      \Pr(s_k\gamma_k|\Gamma_k^j, \mathcal{I}_{k-1})= 1-s_k\gamma_k
    \end{align*}
  \item For $j\ge 2^k$, 
    \begin{align*}
      & \Pr(s_k\gamma_k|\Gamma_k^j, \mathcal{I}_{k-1})\\
      & =s_k\gamma_k + \frac{1-2s_k\gamma_k}{\sqrt{|(CP_{k|k-1}^{j_k^-}C'+R)Y+I|}}   \\
      &\times e^{-\frac{1}{2}(Cm_{k|k-1}^{j_k^-})'[Y^{-1}+(CP_{k|k-1}^{j_k^-}C'+R)]^{-1} Cm_{k|k-1}^{j_k^-}}
    \end{align*}
  \end{itemize}
  with the initial condition $\alpha_{0|-1}^0=p, \alpha_{0|-1}^1=1-p $.
\end{lemma}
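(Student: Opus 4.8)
The plan is to prove the two updates one after another, in each step relating the time-$k$ probabilities $\alpha^i$ to the time-$(k-1)$ ones through the set identity $\Gamma_k^i=\Gamma_{k-1}^{i_k^-}\cap\{\gamma_k=b_k\}$, where $b_k=0$ precisely when $i<2^k$ and $b_k=1$ when $i\ge 2^k$. This identity is exactly what the index map $i\mapsto i_k^-$ and the binary-expansion definition of $\Gamma_k^i$ encode, so it turns every statement about $\Gamma_k^i$ into one about its predecessor $\Gamma_{k-1}^{i_k^-}$ together with the single extra coin flip $\gamma_k$.

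For the \emph{time update} I would use that $\gamma_k$ is an independent Bernoulli variable and is therefore independent of the past channel sequence $(\gamma_0,\ldots,\gamma_{k-1})$ and of $\mathcal{I}_{k-1}$, the latter being a function of the state, the noises and the channel only up to time $k-1$. Conditioning on $\mathcal{I}_{k-1}$ and invoking this independence factorizes the joint event: $\Pr(\Gamma_k^i\mid\mathcal{I}_{k-1})=\Pr(\gamma_k=b_k)\,\Pr(\Gamma_{k-1}^{i_k^-}\mid\mathcal{I}_{k-1})$, which equals $p\,\alpha_{k-1|k-1}^{i_k^-}$ when $b_k=0$ and $(1-p)\,\alpha_{k-1|k-1}^{i_k^-}$ when $b_k=1$. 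Since $i_k^-=i$ for $i<2^k$, this is exactly \eqref{eq:LossProcessTimeUpdate2}--\eqref{eq:LossProcessTimeUpdate1}.

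For the \emph{measurement update} I would apply Bayes' rule to the new datum that $\mathcal{I}_k$ adds to $\mathcal{I}_{k-1}$, namely the reception flag $s_k\gamma_k$. Writing $\alpha_{k|k}^i\propto \Pr(s_k\gamma_k\mid\Gamma_k^i,\mathcal{I}_{k-1})\,\alpha_{k|k-1}^i$ and normalizing by the total probability over all $j$ produces the displayed quotient, so the remaining work is to evaluate the per-hypothesis likelihood $\Pr(s_k\gamma_k\mid\Gamma_k^j,\mathcal{I}_{k-1})$. When $j<2^k$ the channel is lost ($\gamma_k=0$), no packet can arrive whatever the sensor does, and the likelihood is the deterministic $1-s_k\gamma_k$ (one if nothing is received, zero otherwise). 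When $j\ge 2^k$ the channel is good ($\gamma_k=1$), so reception is decided by the scheduler alone; by \eqref{eq:14} and Lemma~\ref{lem:OracleStatePDF} the output $y_k=Cx_k+v_k$ is, conditioned on $\Gamma_k^j,\mathcal{I}_{k-1}$, Gaussian with mean $Cm_{k|k-1}^{j_k^-}$ and covariance $CP_{k|k-1}^{j_k^-}C'+R$. The no-transmission probability is then $\mathbb{E}\{e^{-y_k'Yy_k}\mid\Gamma_k^j,\mathcal{I}_{k-1}\}$ and the transmission probability is its complement, so writing the two outcomes jointly as $s_k\gamma_k+(1-2s_k\gamma_k)\,\mathbb{E}\{e^{-y_k'Yy_k}\mid\Gamma_k^j,\mathcal{I}_{k-1}\}$ reproduces the expression in the statement.

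The hard part will be this Gaussian expectation, $\int e^{-y'Yy}\,\mathcal{N}_y\!\big(Cm_{k|k-1}^{j_k^-},\,CP_{k|k-1}^{j_k^-}C'+R\big)\,dy$, which is the only genuine computation. I would merge the two exponents, complete the square in $y$, and evaluate the remaining Gaussian integral; the determinant product rule and the Sherman--Morrison--Woodbury identity then collapse the prefactor and the surviving quadratic form, with $\Sigma=CP_{k|k-1}^{j_k^-}C'+R$, into $|(CP_{k|k-1}^{j_k^-}C'+R)Y+I|^{-1/2}$ and $e^{-\frac{1}{2}(Cm_{k|k-1}^{j_k^-})'[Y^{-1}+(CP_{k|k-1}^{j_k^-}C'+R)]^{-1}Cm_{k|k-1}^{j_k^-}}$ respectively, the scalar multiplying $Y$ being fixed by the normalization of $\phi$ in the triggering rule. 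Finally the initial condition $\alpha_{0|-1}^0=p$, $\alpha_{0|-1}^1=1-p$ is immediate from $\Pr(\gamma_0=0)=p$ together with $\mathcal{I}_{-1}=\emptyset$.
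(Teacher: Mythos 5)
Your time update is exactly the paper's argument: independence of $\gamma_k$ from $(\gamma_0,\ldots,\gamma_{k-1})$ and from $\mathcal{I}_{k-1}$, together with the identity $\Gamma_k^i=\Gamma_{k-1}^{i_k^-}\cap\{\gamma_k=b_k\}$, gives \eqref{eq:LossProcessTimeUpdate2} and \eqref{eq:LossProcessTimeUpdate1}. Your per-hypothesis likelihoods also coincide with the paper's: the deterministic value $1-s_k\gamma_k$ when $\gamma_k=0$, and, when $\gamma_k=1$, the Gaussian expectation of the triggering probability evaluated by completing the square, the determinant identity $|M|\,|Y+M^{-1}|=|MY+I|$, and the Woodbury identity, which is the paper's computation \eqref{eq:PacketReceiveEstimation}. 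Your caveat about the scalar multiplying $Y$ is apt, since the paper itself silently uses $\phi=e^{-\frac12 y_k'Yy_k}$ inside the proof even though Section II defines $\phi=e^{-y_k'Yy_k}$.

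The genuine gap is at the start of your measurement update. You assert that the new datum $\mathcal{I}_k$ adds to $\mathcal{I}_{k-1}$ is ``the reception flag $s_k\gamma_k$.'' That is not what $\mathcal{I}_k$ is: it also adds $s_k\gamma_k y_k$, the received measurement value when a packet arrives. Bayes' rule applied to $\alpha_{k|k}^i=\Pr(\Gamma_k^i|\mathcal{I}_k)$ therefore gives $\alpha_{k|k}^i\propto f(s_k\gamma_k,\,s_k\gamma_k y_k\,|\,\Gamma_k^i,\mathcal{I}_{k-1})\,\alpha_{k|k-1}^i$, not the quotient you wrote; to reach the formula in the statement one must additionally establish that $\Pr(\Gamma_k^i|s_k\gamma_k,s_k\gamma_k y_k,\mathcal{I}_{k-1})=\Pr(\Gamma_k^i|s_k\gamma_k,\mathcal{I}_{k-1})$. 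This is exactly the step the paper makes explicit (its ``third equality'': when $s_k\gamma_k=0$ the datum $s_k\gamma_k y_k=0$ is vacuous; when $s_k\gamma_k=1$ it argues that knowing $y_k$ is useless for inferring $\Gamma_k^i$). The step cannot be skipped as self-evident: when $s_k\gamma_k=1$, the density of the received value under hypothesis $j\ge 2^k$ is $\mathcal{N}_{y_k}(Cm_{k|k-1}^{j_k^-},\,CP_{k|k-1}^{j_k^-}C'+R)$, which depends on $j$, so on its face the received $y_k$ does discriminate among loss histories, and the lemma's weights contain no $y_k$-dependence precisely because of this conditional-independence claim. Whether or not one finds the paper's one-line justification of that claim fully convincing, a proof of this lemma must state and defend it; your proposal never confronts it, so as written your Bayes step does not deliver the stated formula.
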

\begin{proof} \textit{Time Update}: \eqref{eq:LossProcessTimeUpdate2} follows from the fact that for $i< 2^k$, we have $\gamma_k=0$. Therefore
  \begin{align*}
    \alpha_{k|k-1}^i&=\Pr(\Gamma_{k-1}^i, \gamma_k=0|\mathcal{I}_{k-1})\\
                    &=\Pr(\gamma_k=0)\Pr(\Gamma_{k-1}^i|\mathcal{I}_{k-1})= p\alpha_{k-1|k-1}^i.
  \end{align*}

  \eqref{eq:LossProcessTimeUpdate1} follows from the fact that for $i\ge 2^k$, we have $\gamma_k=1$. Therefore
  \begin{align*}
    \alpha_{k|k-1}^i &= \Pr(\Gamma_{k-1}^{i_k^-}, \gamma_k=1|\mathcal{I}_{k-1})\\
                     &=\Pr(\gamma_k=1)\Pr(\Gamma_{k-1}^{i_k^-}|\mathcal{I}_{k-1})=(1-p)\alpha_{k-1|k-1}^{i_k^-}.
  \end{align*}

  \textit{Measurement Update}: Since
  \begin{align*}
    \alpha_{k|k}^i&=\Pr(\Gamma_k^i|\mathcal{I}_k) = \Pr(\Gamma_k^i|s_k\gamma_k, s_k\gamma_ky_k, \mathcal{I}_{k-1})\\
                  & = \Pr(\Gamma_k^i|s_k\gamma_k, \mathcal{I}_{k-1}) \\
                  &= \frac{\Pr(s_k\gamma_k|\Gamma_k^i, \mathcal{I}_{k-1})\Pr(\Gamma_k^i|\mathcal{I}_{k-1})}{\Pr(s_k\gamma_k|\mathcal{I}_{k-1})}\\
                  &= \frac{\Pr(s_k\gamma_k|\Gamma_k^i, \mathcal{I}_{k-1})\alpha_{k|k-1}^i}{\sum_{j=0}^{2^{k+1}-1} \Pr(s_k\gamma_k|\Gamma_{k}^j, \mathcal{I}_{k-1}) \alpha_{k|k-1}^j},
  \end{align*}
  where the third equality follows from the fact that when $s_k\gamma_k=0$, $s_k\gamma_ky_k=0$, it is useless to know $s_k\gamma_ky_k$; when $s_k\gamma_k=1$, knowing $s_k\gamma_ky_k$ is equivalent to know $y_k$, which is also useless in inferring $\Gamma_k^i$. Next we will show how to calculate $\Pr(s_k\gamma_k|\Gamma_k^i, \mathcal{I}_{k-1})$.

  When $i<2^k$, since $\gamma_k=0$, we have that $s_k\gamma_k\equiv 0$. Therefore  
  \begin{align}
    \label{eq:21}
    \Pr(s_k\gamma_k|\Gamma_k^i, \mathcal{I}_{k-1})= 1-s_k\gamma_k.
  \end{align}

  When $i\ge 2^k$, we have $\gamma_k=1$. Let $M_k^{i_k^-}=CP_{k|k-1}^{i_k^-}C'+R$, we then have
  \begin{align}
    & \Pr(s_k\gamma_k|\Gamma_k^i, \mathcal{I}_{k-1})  = \Pr(s_k |\Gamma_k^i, \mathcal{I}_{k-1})                                                         \nonumber \\
    & = \int_{\mathbb{R}^m}   \Pr(s_k |y_k, \Gamma_k^i, \mathcal{I}_{k-1})f(y_k|  \Gamma_k^i, \mathcal{I}_{k-1}) dy_k \nonumber \\
    & =\int_{\mathbb{R}^m}  \Pr(s_k |y_k)f(Cx_k+v_k|  \Gamma_k^i, \mathcal{I}_{k-1}) dy_k                           \nonumber \\
    & \overset{(a)}{=} \int_{\mathbb{R}^m} \left( s_k(1-2e^{-\frac{1}{2}y_k'Yy_k})+e^{-\frac{1}{2}y_k'Yy_k} \right) \nonumber \\
    &\times f(Cx_k+v_k|  \Gamma_{k-1}^{i_k^-}, \mathcal{I}_{k-1}) dy_k\nonumber \\
    & = \int_{\mathbb{R}^m}  \left( s_k(1-2e^{-\frac{1}{2}y_k'Yy_k})+e^{-\frac{1}{2}y_k'Yy_k} \right)\nonumber \\
    &\times \mathcal{N}_{y_k}(Cm_{k|k-1}^{i_k^-}, M_k^{i_k^-}) dy_k\nonumber \\
    & =s_k + (1-2s_k) \int_{\mathbb{R}^m}  e^{-\frac{1}{2}y_k'Yy_k} \mathcal{N}_{y_k}(Cm_{k|k-1}^{i_k^-}, M_k^{i_k^-}) dy_k\nonumber \\
    &= s_k+ \frac{1-2s_k }{\sqrt{(2\pi)^{m}|M_k^{i_k^-}|}}  \times e^{-\frac12 (Cm_{k|k-1}^{i_k^-})' (M_k^{i_k^-})^{-1} Cm_{k|k-1}^{i_k^-}} \nonumber \\
    & \times \int_{\mathbb{R}^m}  e^{-\frac{1}{2}y_k'[Y+(M_k^{i_k^-})^{-1}]y_k  +  (Cm_{k|k-1}^{i_k^-})'(M_k^{i_k^-})^{-1}y_k } dy_k\nonumber \\
    & \overset{(b)}{=}s_k+ \frac{1-2s_k}{\sqrt{(2\pi)^{m}|M_k^{i_k^-}|}}  e^{-\frac12 (Cm_{k|k-1}^{i_k^-})'(M_k^{i_k^-})^{-1} (*)} \nonumber \\
    &\times \sqrt{\frac{(2\pi)^m}{|Y+(M_k^{i_k^-})^{-1}|}}  \nonumber \\
    &\times e^{\frac{1}{2}(Cm_{k|k-1}^{i_k^-})'(M_k^{i_k^-})^{-1}[Y+(M_k^{i_k^-})^{-1}]^{-1}(M_k^{i_k^-})^{-1} Cm_{k|k-1}^{i_k^-}}\nonumber \\
    & \overset{(c)}{=}s_k + \frac{1-2s_k}{\sqrt{|(M_k^{i_k^-})Y+I|}}  e^{-\frac{1}{2}(Cm_{k|k-1}^{i_k^-})'[Y^{-1}+(M_k^{i_k^-})]^{-1} (*)}, \label{eq:PacketReceiveEstimation}
  \end{align}
  where $(a)$ follows from \eqref{eq:14}; $(b)$ follows from the Gaussian integral and $(c)$ follows from the matrix inversion lemma. 
\end{proof}

In view of Lemma~\ref{lem:OracleStatePDF} and Lemma~\ref{lem:LossProcessEstimate}, the MMSE estimate can be calculated by the Gaussian sum filter~\cite{Anderson1979Book} and is given as follows.
\begin{theorem}
  With the open-loop scheduler and in the presence of packet drops, the MMSE estimator is given by
\begin{align*}
  & \hat{x}_{k|k} = \sum_{i=0}^{2^{k+1}-1}    \alpha_{k|k}^i m_{k|k}^i, \\
  & P_{k|k} = \sum_{i=0}^{2^{k+1}-1}  \alpha_{k|k}^i \left( P_{k|k}^i+ (m_{k|k}^i-\hat{x}_{k|k} ) (m_{k|k}^i-\hat{x}_{k|k} )'\right),\\
  &  \hat{x}_{k|k-1} = \sum_{i=0}^{2^{k}-1}\alpha_{k-1|k-1}^im_{k|k-1}^i,  \\
  &  P_{k|k-1}= \begin{multlined}[t]
    \sum_{i=0}^{2^{k}-1}  \alpha_{k-1|k-1}^i ( P_{k|k-1}^i\\
    + (m_{k|k-1}^i-\hat{x}_{k|k-1} ) (m_{k|k-1}^i-\hat{x}_{k|k-1} )').
  \end{multlined}
\end{align*}

\end{theorem}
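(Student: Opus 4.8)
The plan is to observe that the theorem merely extracts the first two moments of the Gaussian mixtures already established in~\eqref{eq.estimatePDF} and~\eqref{eq.predictPDF}. Recall that the MMSE estimate of $x_k$ given $\mathcal{I}_k$ is the conditional mean $\hat{x}_{k|k}=\E{x_k|\mathcal{I}_k}$ and that the associated minimum error covariance $P_{k|k}$ equals the conditional covariance of $x_k$ given $\mathcal{I}_k$. Hence it suffices to compute the mean and covariance of the mixture density $f(x_k|\mathcal{I}_k)$, and the analogous quantities for $f(x_k|\mathcal{I}_{k-1})$. Throughout I would invoke Lemma~\ref{lem:OracleStatePDF} to identify each mixture component $f(x_k|\Gamma_k^i,\mathcal{I}_k)$ as the Gaussian $\mathcal{N}_{x_k}(m_{k|k}^i,P_{k|k}^i)$ and Lemma~\ref{lem:LossProcessEstimate} to identify its weight $\Pr(\Gamma_k^i|\mathcal{I}_k)$ as $\alpha_{k|k}^i$.

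For the mean, I would integrate $x_k$ against~\eqref{eq.estimatePDF}, exchange the finite sum with the integral, and use that the weights $\alpha_{k|k}^i$ are constant in $x_k$:
\begin{align*}
  \hat{x}_{k|k}&=\int x_k f(x_k|\mathcal{I}_k)\,dx_k \\
  &= \sum_{i=0}^{2^{k+1}-1}\alpha_{k|k}^i\int x_k\,\mathcal{N}_{x_k}(m_{k|k}^i,P_{k|k}^i)\,dx_k \\
  &= \sum_{i=0}^{2^{k+1}-1}\alpha_{k|k}^i m_{k|k}^i,
\end{align*}
since the first moment of the $i$-th Gaussian component is $m_{k|k}^i$. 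This gives the first claimed identity.

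For the covariance I would integrate the outer product $(x_k-\hat{x}_{k|k})(x_k-\hat{x}_{k|k})'$ against~\eqref{eq.estimatePDF}. The key manipulation is to split $x_k-\hat{x}_{k|k}=(x_k-m_{k|k}^i)+(m_{k|k}^i-\hat{x}_{k|k})$ inside the $i$-th component and expand the product into four terms; the two cross terms vanish because $\int (x_k-m_{k|k}^i)\,\mathcal{N}_{x_k}(m_{k|k}^i,P_{k|k}^i)\,dx_k=0$, while the remaining two integrate to $P_{k|k}^i$ and $(m_{k|k}^i-\hat{x}_{k|k})(m_{k|k}^i-\hat{x}_{k|k})'$, respectively. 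Summing against the weights $\alpha_{k|k}^i$ reproduces the stated formula for $P_{k|k}$, which is exactly the standard within-component-plus-between-component variance decomposition of a Gaussian mixture (the Gaussian sum filter of~\cite{Anderson1979Book}). The predicted quantities $\hat{x}_{k|k-1}$ and $P_{k|k-1}$ then follow by the identical two computations applied to~\eqref{eq.predictPDF}, with the components now indexed by $0\le i\le 2^k-1$, component means $m_{k|k-1}^i$ and covariances $P_{k|k-1}^i$ from Lemma~\ref{lem:OracleStatePDF}, and weights $\Pr(\Gamma_{k-1}^i|\mathcal{I}_{k-1})=\alpha_{k-1|k-1}^i$.

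Since both lemmas are already in hand, there is no genuine analytic obstacle here; the argument is a routine moment computation. The only points demanding care are the cross-term cancellation in the covariance expansion and the bookkeeping of the two different index ranges — the filtered mixture carries $2^{k+1}$ components whereas the predicted mixture carries $2^k$ — together with the corresponding relabeling of the weights from $\alpha_{k|k}^i$ to $\alpha_{k-1|k-1}^i$.
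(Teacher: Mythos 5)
Your proposal is correct and follows essentially the same route as the paper: the paper simply invokes Lemma~\ref{lem:OracleStatePDF} and Lemma~\ref{lem:LossProcessEstimate} to identify the conditional densities as Gaussian mixtures and then cites the Gaussian sum filter of~\cite{Anderson1979Book} for the moment formulas, which is exactly the computation you carry out explicitly (mixture mean, and the within-plus-between covariance decomposition with vanishing cross terms). Your version just spells out the routine integration that the paper leaves to the citation, including the correct identification of the predicted-mixture weights as $\alpha_{k-1|k-1}^i$.
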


We can verify that when there is no packet drop, the optimal estimator degenerates to the one given in~\cite{HanDuo2015TAC}.  Besides, it is straightforward from the above expressions that the time update of the MMSE estimator can be written as
\begin{align*}
  \hat{x}_{k+1|k}=A\hat{x}_{k|k}, \quad P_{k+1|k}=AP_{k|k}A'+Q. 
\end{align*}
However, there are no such simple relations for the measurement update of the optimal estimator.

\section{Optimal Closed-Loop Estimator\label{sec:ClosedLoopEstimator}}

In this section, we derive the optimal estimators when the closed-loop scheduler is applied. Similar to the open-loop scheduler case, for closed-loop schedulers, we have the following result describing the pdf of $x_k$ conditioned on $\Gamma_k^i$ and $\mathcal{I}_k$. The proof is similar to that of Lemma~\ref{lem:OracleStatePDF} and is omitted for brevity. 
\begin{lemma} \label{lem:OracleStatePDFClosedLoop}
  \begin{align*}
    f(x_k|\Gamma_k^i, \mathcal{I}_k)&=\mathcal{N}_{x_k} (m_{k|k}^i, P_{k|k}^i), \quad 0\le i \le 2^{k+1}-1\\
    f(x_k|\Gamma_{k-1}^i, \mathcal{I}_{k-1})&= \mathcal{N}_{x_k} (m_{k|k-1}^i, P_{k|k-1}^i), \quad 0\le i \le 2^{k}-1
  \end{align*}
  where $m_{k|k}^i, P_{k|k}^i, m_{k|k-1}^i, P_{k|k-1}^i$ satisfy the following recursive equations. 

  \textit{Time Update}:
  \begin{gather*}
    m_{k+1|k}^i=  Am_{k|k}^i, P_{k+1|k}^i= AP_{k|k}^iA'+Q.
  \end{gather*}

  \textit{Measurement Update}:
  \begin{itemize}
  \item  For $i<2^k$,
    \begin{gather*}
      m_{k|k}^i=m_{k|k-1}^i, P_{k|k}^i=P_{k|k-1}^i. 
    \end{gather*}

  \item For $i\ge 2^k$, 
  \end{itemize}
  \begin{gather*}
    m_{k|k}^i= m_{k|k-1}^{i_k^-}+s_k\gamma_kK_k^{i_k^-}z_k,\\
    P_{k|k}^i=P_{k|k-1}^{i_k^-}-K_k^{i_k^-}CP_{k|k-1}^{i_k^-},\\
    K_k^{i_k^-}=P_{k|k-1}^{i_k^-}C'[CP_{k|k-1}^{i_k^-}C'+R+(1-s_k\gamma_k)Z^{-1}]^{-1},
  \end{gather*}
  where the initial conditions are $m_{0|-1}^0= 0, P_{0|-1}^0= \Sigma_0$.
\end{lemma}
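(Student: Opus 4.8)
The plan is to run the same induction on $k$ that underlies Lemma~\ref{lem:OracleStatePDF}, replacing only the scheduler-dependent ingredients $Y\mapsto Z$ and $y_k\mapsto z_k=y_k-\hat y_{k|k-1}$. As the inductive hypothesis I would assume $f(x_k|\Gamma_{k-1}^{i},\mathcal{I}_{k-1})=\mathcal{N}_{x_k}(m_{k|k-1}^{i},P_{k|k-1}^{i})$ for every admissible $i$; the base case follows from $x_0\sim\mathcal{N}_{x_0}(0,\Sigma_0)$, giving $m_{0|-1}^0=0$ and $P_{0|-1}^0=\Sigma_0$. The time update is verbatim the open-loop one: $x_{k+1}=Ax_k+w_k$ with $w_k$ zero-mean Gaussian and independent of $\mathcal{I}_k$, so propagating $\mathcal{N}_{x_k}(m_{k|k}^i,P_{k|k}^i)$ through this linear--Gaussian map returns $\mathcal{N}_{x_{k+1}}(Am_{k|k}^i,\,AP_{k|k}^iA'+Q)$. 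The branch $i<2^k$ forces $\gamma_k=0$, so no packet can reach the estimator; by \eqref{eq:14} the conditioning set $\mathcal{I}_k$ contributes nothing beyond $\mathcal{I}_{k-1}$ and the update is the identity, $m_{k|k}^i=m_{k|k-1}^i$, $P_{k|k}^i=P_{k|k-1}^i$. Thus the whole argument concentrates on the measurement update for $i\ge2^k$, where $\gamma_k=1$ and hence $s_k=s_k\gamma_k$.

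For $i\ge2^k$ I would split on $s_k\gamma_k$. If $s_k\gamma_k=1$ the packet arrives and $y_k$ is observed exactly, so Bayes' rule against the Gaussian prior $\mathcal{N}_{x_k}(m_{k|k-1}^{i_k^-},P_{k|k-1}^{i_k^-})$ and the Gaussian likelihood $\mathcal{N}_{y_k}(Cx_k,R)$ reproduces the ordinary Kalman correction, the stated gain specializing correctly because $(1-s_k\gamma_k)Z^{-1}=0$. If instead $s_k\gamma_k=0$ while $\gamma_k=1$ (that is $s_k=0$), the estimator learns only the soft verdict $\zeta_k\le\phi(y_k,\hat y_{k|k-1})=e^{-z_k'Zz_k}$, whose likelihood given the state is $\Pr(s_k=0\,|\,x_k,\Gamma_{k-1}^{i_k^-},\mathcal{I}_{k-1})=\int_{\mathbb{R}^m}e^{-z_k'Zz_k}\,\mathcal{N}_{y_k}(Cx_k,R)\,dy_k$. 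The core computation is to carry out this Gaussian integral and read it as an extra pseudo-observation of $Cx_k$ centered at $\hat y_{k|k-1}$: it inflates the innovation covariance from $CP_{k|k-1}^{i_k^-}C'+R$ to $CP_{k|k-1}^{i_k^-}C'+R+Z^{-1}$, and completing the square against the prior (equivalently, the Gaussian-product rule plus the matrix inversion lemma used in steps~$(b)$--$(c)$ of the proof of Lemma~\ref{lem:LossProcessEstimate}) keeps the posterior Gaussian and yields $K_k^{i_k^-}=P_{k|k-1}^{i_k^-}C'[CP_{k|k-1}^{i_k^-}C'+R+Z^{-1}]^{-1}$ together with $P_{k|k}^i=P_{k|k-1}^{i_k^-}-K_k^{i_k^-}CP_{k|k-1}^{i_k^-}$. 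The two sub-cases are then folded into the single displayed recursion by reinstating the indicator $s_k\gamma_k$ in front of both the correction $K_k^{i_k^-}z_k$ and the $Z^{-1}$ term.

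The step I expect to be the genuine obstacle is this $s_k=0,\ \gamma_k=1$ branch, which has no textbook-Kalman analogue. Two things must be checked with care. First, the triggering likelihood is Gaussian in the innovation $z_k$ rather than in $y_k$ itself, and its centering $\hat y_{k|k-1}=\E{y_k|\mathcal{I}_{k-1}}$ is the \emph{mixture} prediction, not the component prediction $Cm_{k|k-1}^{i_k^-}$; since $\hat y_{k|k-1}$ is $\mathcal{I}_{k-1}$-measurable it is a constant under the conditioning, which is what lets the integral stay Gaussian, but one must track it faithfully so that no spurious factor leaks into the conditional mean. Second, a direct Bayesian update in this branch formally contributes a mean correction proportional to $\hat y_{k|k-1}-Cm_{k|k-1}^{i_k^-}$; this term vanishes identically in the single-component, drop-free setting of~\cite{HanDuo2015TAC} (where $\hat y_{k|k-1}=Cm_{k|k-1}$), and confirming that it is handled consistently with the stated mean recursion $m_{k|k}^i=m_{k|k-1}^{i_k^-}+s_k\gamma_kK_k^{i_k^-}z_k$ is the crux of the argument. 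Once this branch is settled, Gaussianity is closed by the already-established time update, completing the induction exactly as in Lemma~\ref{lem:OracleStatePDF}.
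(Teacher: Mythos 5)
Your plan follows the same route as the paper's own (omitted) proof---induction as in Lemma~\ref{lem:OracleStatePDF} with $Y\mapsto Z$, $y_k\mapsto z_k$---and, to your credit, you put your finger on exactly the point where this is \emph{not} a verbatim substitution: the trigger is centered at the mixture prediction $\hat y_{k|k-1}=C\hat x_{k|k-1}$, which, once packet drops create several components, need not coincide with the component prediction $Cm_{k|k-1}^{i_k^-}$. The gap is that you only flag this as ``the crux'' and never carry it out; and carrying it out does not confirm the stated recursion, it contradicts it. In the branch $\gamma_k=1$, $s_k=0$, the likelihood of the hold event given $x_k$ is proportional to $\exp\bigl(-\tfrac12(Cx_k-\hat y_{k|k-1})'(R+Z^{-1})^{-1}(Cx_k-\hat y_{k|k-1})\bigr)$, i.e., a pseudo-observation of $Cx_k$ with value $\hat y_{k|k-1}$ and noise covariance $R+Z^{-1}$; Bayes' rule against the prior $\mathcal{N}_{x_k}(m_{k|k-1}^{i_k^-},P_{k|k-1}^{i_k^-})$ then yields the posterior mean $m_{k|k-1}^{i_k^-}+K_k^{i_k^-}\bigl(\hat y_{k|k-1}-Cm_{k|k-1}^{i_k^-}\bigr)$, not $m_{k|k-1}^{i_k^-}$. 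Likewise, in the reception branch $s_k\gamma_k=1$ the Bayesian innovation is $y_k-Cm_{k|k-1}^{i_k^-}$, not $z_k=y_k-\hat y_{k|k-1}$. Both branches therefore differ from the displayed mean recursion by the same term $K_k^{i_k^-}\bigl(\hat y_{k|k-1}-Cm_{k|k-1}^{i_k^-}\bigr)$, while the covariance and gain recursions, the time update, the $i<2^k$ branch, and the base case are all as you say.

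This mismatch term does not vanish on components that matter, so the deferred check genuinely fails rather than being a formality. Concretely: suppose no packet is received at time $0$, a packet is received at time $1$, and none at time $2$. After time $0$ the two surviving hypotheses have equal means but \emph{different} covariances; the Kalman update with $y_1$ at time $1$ therefore gives them different means (different gains applied to the same $y_1$) with both posterior weights positive; hence at time $2$ the mixture prediction $\hat y_{2|1}$ is a strict convex combination of distinct component predictions and differs from each of them, so for the $\gamma_2=1$ hypotheses the extra term $K_2^{i_2^-}(\hat y_{2|1}-Cm_{2|1}^{i_2^-})$ is nonzero on components with positive weight. Note also that the paper's own Lemma~\ref{lem:LossProcessEstimateClosedLoop} implicitly concedes the point, since it centers $f(z_k|\Gamma_k^j,\mathcal{I}_{k-1})$ at $C(m_{k|k-1}^{j_k^-}-\hat x_{k|k-1})\neq 0$. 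So your induction cannot close with the mean recursion as stated: the honest Bayesian computation produces the component-centered innovation ($y_k-Cm_{k|k-1}^{i_k^-}$ upon reception, $\hat y_{k|k-1}-Cm_{k|k-1}^{i_k^-}$ upon hold with $\gamma_k=1$), and an additional argument---absent from your proposal and from the paper---would be needed to justify replacing $Cm_{k|k-1}^{i_k^-}$ by $\hat y_{k|k-1}$. The proof is incomplete, and as planned uncompletable, precisely at the step you yourself identified and postponed.
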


Next we will show how to calculate
\begin{align*}
  \alpha_{k|k-1}^i=\Pr(\Gamma_k^i|\mathcal{I}_{k-1}), \alpha_{k|k}^i=\Pr(\Gamma_k^i|\mathcal{I}_{k}),
\end{align*}
for the closed-loop scheduler case.

\begin{lemma} \label{lem:LossProcessEstimateClosedLoop}
  $\alpha_{k|k-1}^i$ and $\alpha_{k|k}^i$ with $0\le i\le 2^{k+1}-1$ can be calculated recursively as

  \textit{Time Update}:\begin{itemize}
  \item For $i< 2^k$,
    \begin{align*}
      \alpha_{k|k-1}^i= p\alpha_{k-1|k-1}^i
    \end{align*}

  \item For $i\ge 2^k$, 
    \begin{align*}
      \alpha_{k|k-1}^i =(1-p)\alpha_{k-1|k-1}^{i_k^-}
    \end{align*}
  \end{itemize}

  \textit{Measurement Update}:
  \begin{align*}
    \alpha_{k|k}^i= \frac{\Pr(s_k\gamma_k|\Gamma_k^i, \mathcal{I}_{k-1})\alpha_{k|k-1}^i}{\sum_{j=0}^{2^{k+1}-1} \Pr(s_k\gamma_k|\Gamma_{k}^j, \mathcal{I}_{k-1}) \alpha_{k|k-1}^j},
  \end{align*}
  where
  \begin{itemize}
  \item For $j< 2^k$,
    \begin{align*}
      \Pr(s_k\gamma_k|\Gamma_k^j, \mathcal{I}_{k-1})= 1-s_k\gamma_k
    \end{align*}
  \item For $j\ge 2^k$, 

  \end{itemize}
  \begin{align*}
    & \Pr(s_k\gamma_k|\Gamma_k^j, \mathcal{I}_{k-1}) =s_k\gamma_k + \frac{1-2s_k\gamma_k}{\sqrt{|(CP_{k|k-1}^{j_k^-}C'+R)Z+I|}}\\
    &\times e^{-\frac{1}{2}(C(m_{k|k-1}^{j_k^-}-\hat{x}_{k|k-1}))'[Z^{-1}+(CP_{k|k-1}^{j_k^-}C'+R)]^{-1} (*) }
  \end{align*}
  with initial conditions $\alpha_{0|-1}^0=p, \alpha_{0|-1}^1=1-p$. 
\end{lemma}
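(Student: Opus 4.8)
The plan is to mirror the proof of Lemma~\ref{lem:LossProcessEstimate} almost verbatim, since the only structural difference between the open-loop and closed-loop schedulers is the argument of the triggering function $\phi$. First I would dispose of the time update. The recursions for $\alpha_{k|k-1}^i$ depend only on the independence of the Bernoulli variable $\gamma_k$ from $\mathcal{I}_{k-1}$ and on whether the binary digit $b_k$ of the index $i$ is $0$ (so $\gamma_k=0$) or $1$ (so $\gamma_k=1$). Neither computation references the scheduling rule, so the identities $\alpha_{k|k-1}^i=p\,\alpha_{k-1|k-1}^i$ for $i<2^k$ and $\alpha_{k|k-1}^i=(1-p)\,\alpha_{k-1|k-1}^{i_k^-}$ for $i\ge 2^k$ carry over directly, together with the initialization $\alpha_{0|-1}^0=p$, $\alpha_{0|-1}^1=1-p$.

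For the measurement update, the Bayes-rule decomposition is also unchanged: conditioning on $\mathcal{I}_k=\{s_k\gamma_k,\,s_k\gamma_k y_k,\,\mathcal{I}_{k-1}\}$ and observing that $s_k\gamma_k y_k$ carries no extra information about $\Gamma_k^i$ beyond $s_k\gamma_k$ (it vanishes when $s_k\gamma_k=0$, and knowing $y_k$ does not help locate the loss pattern when $s_k\gamma_k=1$) yields the same normalized quotient with likelihoods $\Pr(s_k\gamma_k|\Gamma_k^j,\mathcal{I}_{k-1})$. The branch $j<2^k$, where $\gamma_k=0$ forces $s_k\gamma_k\equiv 0$ and hence $\Pr(s_k\gamma_k|\Gamma_k^j,\mathcal{I}_{k-1})=1-s_k\gamma_k$, is identical.

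The only genuine computation is the branch $j\ge 2^k$, where $\gamma_k=1$ and I must evaluate $\Pr(s_k|\Gamma_k^j,\mathcal{I}_{k-1})$ under the closed-loop rule $\phi=e^{-\frac12 z_k'Zz_k}$ with $z_k=y_k-\hat{y}_{k|k-1}$. Writing $\Pr(s_k|y_k)=s_k+(1-2s_k)e^{-\frac12 z_k'Zz_k}$ and integrating against $f(y_k|\Gamma_{k-1}^{j_k^-},\mathcal{I}_{k-1})=\mathcal{N}_{y_k}(Cm_{k|k-1}^{j_k^-},M_k^{j_k^-})$, which is Gaussian by Lemma~\ref{lem:OracleStatePDFClosedLoop} with $M_k^{j_k^-}=CP_{k|k-1}^{j_k^-}C'+R$, I would complete the square in $y_k$ exactly as in steps $(a)$--$(c)$ of Lemma~\ref{lem:LossProcessEstimate}. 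The Gaussian integral then produces the prefactor $1/\sqrt{|M_k^{j_k^-}Z+I|}$, and the matrix inversion lemma collapses the quadratic exponent onto $[Z^{-1}+M_k^{j_k^-}]^{-1}$ acting on the residual mean.

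The main point to get right --- and the one substantive departure from the open-loop proof --- is the center of that residual. Because $z_k$ is measured against the estimator's single prediction $\hat{y}_{k|k-1}=C\hat{x}_{k|k-1}$, which is the mixture mean over all hypotheses and is therefore common to every $\Gamma_k^j$, completing the square shifts $Cm_{k|k-1}^{j_k^-}$ by $\hat{y}_{k|k-1}$ rather than by zero. This gives the residual $Cm_{k|k-1}^{j_k^-}-\hat{y}_{k|k-1}=C(m_{k|k-1}^{j_k^-}-\hat{x}_{k|k-1})$, and substituting it into the Bayes normalization yields the claimed likelihood. I expect no real obstacle beyond bookkeeping; the one thing to verify carefully is that $\hat{y}_{k|k-1}$ enters as the global predictor, independent of $j$, so that the shift is by the mixture mean $\hat{x}_{k|k-1}$ and not by the component mean $m_{k|k-1}^{j_k^-}$, which would incorrectly annihilate the exponent.
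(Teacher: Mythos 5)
Your proposal is correct and follows essentially the same route as the paper: the time update and the Bayes decomposition carry over verbatim from Lemma~\ref{lem:LossProcessEstimate}, and the likelihood for $j\ge 2^k$ reduces to the same Gaussian integral, with the residual correctly centered at $C(m_{k|k-1}^{j_k^-}-\hat{x}_{k|k-1})$ because $\hat{y}_{k|k-1}$ is the hypothesis-independent mixture predictor. The only cosmetic difference is that you integrate in the variable $y_k$ and complete the square, whereas the paper changes variables to $z_k=y_k-\hat{y}_{k|k-1}$ and integrates directly; these are the same computation.
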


\begin{proof} The proof of the time update is the same as that of Lemma~\ref{lem:LossProcessEstimate}. The measurement update only differs in the calculation of $\Pr(s_k\gamma_k|\Gamma_k^j, \mathcal{I}_{k-1})$ for $j\ge 2^k$ which is demonstrated as follows. For $j\ge 2^k$,  Let $M_k^{j_k^-}=CP_{k|k-1}^{j_k^-}C'+R$, we have
  \begin{align*}
    & \Pr(s_k\gamma_k|\Gamma_k^j, \mathcal{I}_{k-1})  = \Pr(s_k |\Gamma_k^j, \mathcal{I}_{k-1})                                                         \\
    & = \int_{\mathbb{R}^m}   \Pr(s_k |z_k, \Gamma_k^j, \mathcal{I}_{k-1})f(z_k|  \Gamma_k^j, \mathcal{I}_{k-1}) dz_k \\
    & =\int_{\mathbb{R}^m}  \Pr(s_k |z_k)f(C(x_k-\hat{x}_{k|k-1})+v_k|  \Gamma_k^j, \mathcal{I}_{k-1}) dz_k                           \\
    & =\int_{\mathbb{R}^m} \left( s_k(1-2e^{-\frac{1}{2}z_k'Zz_k})+e^{-\frac{1}{2}z_k'Zz_k} \right) \\
    &\times f(C(x_k-\hat{x}_{k|k-1})+v_k|  \Gamma_{k-1}^{j_k^-}, \mathcal{I}_{k-1}) dz_k\\
    & = \int_{\mathbb{R}^m}  \left( s_k(1-2e^{-\frac{1}{2}z_k'Zz_k})+e^{-\frac{1}{2}z_k'Zz_k} \right) \\
    & \times \mathcal{N}_{z_k}(C(m_{k|k-1}^{j_k^-}-\hat{x}_{k|k-1}), CP_{k|k-1}^{j_k^-}C'+R) dz_k\\
    &= s_k+ \frac{1-2s_k}{\sqrt{(2\pi)^{m}|CP_{k|k-1}^{j_k^-}C'+R|}}\\
    &\times \int_{\mathbb{R}^m} e^{-\frac{1}{2}z_k'Zz_k-\frac12 (z_k-C(m_{k|k-1}^{j_k^-}-\hat{x}_{k|k-1}))' (M_k^{j_k^-})^{-1} (*) }dz_k\\
    & =s_k + \frac{1-2s_k}{\sqrt{|(CP_{k|k-1}^{j_k^-}C'+R)Z+I|}} \\
    &\times e^{-\frac{1}{2}(C(m_{k|k-1}^{j_k^-}-\hat{x}_{k|k-1}))'[Z^{-1}+(CP_{k|k-1}^{j_k^-}C'+R)]^{-1} (*) },
  \end{align*}
  where the last equality can be proved similarly as~\eqref{eq:PacketReceiveEstimation}. 
\end{proof}

In view of Lemma~\ref{lem:OracleStatePDFClosedLoop} and Lemma~\ref{lem:LossProcessEstimateClosedLoop}, we have the following theorem.
\begin{theorem}
  With the closed-loop scheduler and in the presence of packet drops, the MMSE estimator is given by
\begin{align*}
  &      \hat{x}_{k|k}= \sum_{i=0}^{2^{k+1}-1}    \alpha_{k|k}^i m_{k|k}^i, \\
  &      P_{k|k}= \sum_{i=0}^{2^{k+1}-1}  \alpha_{k|k}^i \left( P_{k|k}^i+ (m_{k|k}^i-\hat{x}_{k|k} ) (m_{k|k}^i-\hat{x}_{k|k} )'\right),\\
  &       \hat{x}_{k|k-1} = \sum_{i=0}^{2^{k}-1} \alpha_{k-1|k-1}^i m_{k|k-1}^i,  \\
  &       P_{k|k-1} =\begin{multlined}[t]
    \sum_{i=0}^{2^{k}-1}  \alpha_{k-1|k-1}^i ( P_{k|k-1}^i \\+ (m_{k|k-1}^i-\hat{x}_{k|k-1} ) (m_{k|k-1}^i-\hat{x}_{k|k-1} )').
  \end{multlined}
\end{align*}
\end{theorem}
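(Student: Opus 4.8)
The plan is to recognize the four claimed expressions as nothing more than the first two moments of the Gaussian mixtures~\eqref{eq.estimatePDF} and~\eqref{eq.predictPDF}, and to read them off via the standard moment formulas for a mixture distribution (the Gaussian sum filter of~\cite{Anderson1979Book}). Since the MMSE estimate is by definition the conditional mean $\hat{x}_{k|k}=\E{x_k|\mathcal{I}_k}$ and $P_{k|k}$ is the associated conditional error covariance, the whole statement reduces to computing the mean and covariance of~\eqref{eq.estimatePDF}, and analogously of~\eqref{eq.predictPDF}.

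First I would compute the mean. By~\eqref{eq.estimatePDF} the posterior is the mixture $f(x_k|\mathcal{I}_k)=\sum_{i=0}^{2^{k+1}-1}\alpha_{k|k}^i f(x_k|\Gamma_k^i,\mathcal{I}_k)$, whose weights $\alpha_{k|k}^i=\Pr(\Gamma_k^i|\mathcal{I}_k)$ are supplied by Lemma~\ref{lem:LossProcessEstimateClosedLoop} and whose components, by Lemma~\ref{lem:OracleStatePDFClosedLoop}, are $\mathcal{N}_{x_k}(m_{k|k}^i,P_{k|k}^i)$. Integrating against $x_k$ and exchanging the finite sum with the integral gives $\hat{x}_{k|k}=\sum_i \alpha_{k|k}^i \int x_k f(x_k|\Gamma_k^i,\mathcal{I}_k)\,dx_k=\sum_i \alpha_{k|k}^i m_{k|k}^i$, which is the claimed expression.

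Next I would obtain the covariance by the law of total variance, conditioning on the loss sequence. Starting from $P_{k|k}=\E{(x_k-\hat{x}_{k|k})(x_k-\hat{x}_{k|k})'|\mathcal{I}_k}$ and applying the tower property over the partition $\{\Gamma_k^i\}$, I get $P_{k|k}=\sum_i \alpha_{k|k}^i\,\E{(x_k-\hat{x}_{k|k})(x_k-\hat{x}_{k|k})'|\Gamma_k^i,\mathcal{I}_k}$. Inside the $i$-th term I split $x_k-\hat{x}_{k|k}=(x_k-m_{k|k}^i)+(m_{k|k}^i-\hat{x}_{k|k})$; the two cross terms vanish because $\E{x_k-m_{k|k}^i|\Gamma_k^i,\mathcal{I}_k}=0$ while $m_{k|k}^i-\hat{x}_{k|k}$ is deterministic under the conditioning, leaving $P_{k|k}^i+(m_{k|k}^i-\hat{x}_{k|k})(m_{k|k}^i-\hat{x}_{k|k})'$. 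Summing over $i$ yields the stated $P_{k|k}$, and the one-step prediction formulas for $\hat{x}_{k|k-1}$ and $P_{k|k-1}$ follow by the identical argument applied to~\eqref{eq.predictPDF}.

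The main point requiring care is not analytic but combinatorial, namely the index bookkeeping. The posterior mixture ranges over $2^{k+1}$ components weighted by $\alpha_{k|k}^i$, whereas the prediction mixture~\eqref{eq.predictPDF} collapses to only $2^{k}$ components weighted by $\alpha_{k-1|k-1}^i$ rather than by $\alpha_{k|k-1}^i$. I would justify this by observing that $\mathcal{I}_{k-1}$ depends only on $\gamma_0,\ldots,\gamma_{k-1}$, so $\Pr(\Gamma_{k-1}^i|\mathcal{I}_{k-1})=\alpha_{k-1|k-1}^i$, and that by~\eqref{eq:14} appending $\gamma_k$ without the corresponding measurement neither refines the conditional state density nor changes these weights. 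Beyond this indexing check the argument is routine moment matching and introduces no new estimate.
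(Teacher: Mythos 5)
Your proposal is correct and follows the same route as the paper: the paper proves this theorem simply by invoking Lemma~\ref{lem:OracleStatePDFClosedLoop} and Lemma~\ref{lem:LossProcessEstimateClosedLoop} together with the standard Gaussian sum filter moment formulas of~\cite{Anderson1979Book}, which is exactly the moment-matching computation (mean of the mixture, law of total variance, and the $\alpha_{k-1|k-1}^i$ indexing for the prediction step) that you have spelled out explicitly.
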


\section{Reduced Computational Complexities\label{sec:ReduceComputeComplexity}}

The problem considered in this paper is similar to the state estimation problem of Markov jump systems with unknown jump modes, where it is shown that the optimal nonlinear filter is obtained from a bank of Kalman filters, which requires exponentially increasing memory and computation with time~\cite{CostaO2005Book}. The generalized pseudo Bayes (GPB) algorithm~\cite{JafferAminG1971IS} and the interacting multiple model (IMM) algorithm~\cite{BlomHenkAP1988TAC} are two commonly used sub-optimal algorithms to overcome the computational complexities. The approximations in the GPB algorithm consist of restricting the probability density $f(x_k|\mathcal{I}_k)$ to depend on at most the last $N$ random variables $\gamma_k, \ldots, \gamma_{k-N+1}$ and approximate each hypothesis $f(x_k|\gamma_k, \ldots, \gamma_{k-N+1}, \mathcal{I}_k)$ with a Gaussian distribution. Moreover, a hypothesis merging operation is introduced at every step to prevent the increase of hypothesis numbers with time. The suboptimum procedure approaches the optimum one with increasing $N$. The number $N$ is to be chosen on the basis of the desired estimation performance subject to the constraint of the allowable storage capacity. 
The IMM estimator further exploits the timing of hypothesis merging to reduce the computational complexity. 
Owing to its excellent estimation performance and low computational cost, the IMM algorithm has been widely applied in various fields~\cite{MazorEfim1998TAC}. The principles of the GPB and the IMM algorithms can be utilized to derive sub-optimal estimators for the problem considered in this paper. The derivations are straightforward following~\cite{JafferAminG1971IS} and~\cite{SeahChzeEng2011TAES} and are omitted here. 

Another simple strategy is to directly apply the Gaussian mixture reduction algorithms~\cite{DFCrouse2011IFC} at each step of the measurement update to reduce the numbers of components in the Gaussian mixture model. This strategy has been applied to the Bayes filtering problem to reduce the computational complexities~\cite{WillsAdrianG2017ArXiv}. However, it should be noted that the performance of all the above mentioned algorithms can only be evaluated via Monte Carlo simulations and there are no systematic methods to analyze the performance.

\section{Simulations \label{sec:simulations}}

In simulations, we adopt the same system parameters as in~\cite{HanDuo2015TAC}, which are
\begin{align*}
  A=
  \begin{bmatrix}
    0.8 &\\
    & 0.95 
  \end{bmatrix}, C=[1,1], \Sigma_0= Q=
      \begin{bmatrix}
        1& \\
        & 1
      \end{bmatrix}, R=1.
\end{align*}

We only conduct simulations under the open loop scheduler setting with the optimal estimator, in conjunction with the oracle estimator, the GPB estimator and the OLSET-KF estimator in~\cite{HanDuo2015TAC}, where no packet drops are considered.  { The OLSET-KF estimator does not consider packet drops.
When the estimator fails to receive a packet, it always assumes that this is caused by the hold of transmission from the scheduler.} The oracle estimator is the optimal estimator under the assumption that the estimator knows the value of $\gamma_k$ at each step. Therefore, if the estimator fails to receive a packet, it knows whether this is caused by the channel or the scheduler. If this is caused by the channel, i.e., $\gamma_k=0$, no measurement update is conducted and vice versa. Clearly, the oracle estimator has the smallest mean square error (MSE) and can be used as a benchmark to evaluate the performance of other estimators.

 In simulations, the schedule parameter $Y$ is selected as $Y=1$ and $N=2$ is selected for the GPB estimator. We compare the performance of different estimators and we adopt Monte Carlo methods with 1000 independent experiments to evaluate the sum of MSE $\sum_{k=0}^{9}\E{\|x_k-\hat{x}_{k|k}\|^2}$ under different packet drop rates.
The simulation results are illustrated in Fig.~\ref{fig:sim1}, where the relative sum of MSE is plotted. The relative sum of MSE is defined as the sum of MSE of an estimator divided by the sum of MSE of the oracle estimator. It is clear from Fig.~\ref{fig:sim1} that the sum of MSE of the GPB estimator is close to that of the optimal estimator and is much smaller than the OLSET-KF, which shows the superior performance of the GPB estimator and also indicates the advantage of considering packet drops in the remote state estimation problem.
Moreover, in the case of $p=0$ and $p=1$, the sum of MSE of the OLSET-KF, the optimal estimator, the GPB estimator and the oracle estimator are equal. This is because when $p=0$ ($p=1$), the optimal estimator (GPB estimator) assigns zero probability to all the hypotheses with a $\gamma_k=0$ ($\gamma_k=1$). Therefore, only the hypothesis with $\gamma_k=1$ ($\gamma_k=0$) for all $k$ is preserved. As a result, the estimate of the optimal estimator (GPB estimator) is the same with the oracle estimator. Therefore, for the case that $p=0$ ($p=1$), the optimal estimator (GPB estimator) and the oracle estimator have the same sum of MSE. The recursions of the OLSET-KF and the oracle estimator are the same for the case $p=0$, where there are no packet drops. For the case that $p=1$, even though the recursions of the OLSET-KF and the oracle estimator are different, since they both start with $\hat{x}_{0|-1}=0$, their estimates would always be $\hat{x}_{k|k}=0$. Therefore, for the case that $p=0$ and $p=1$, the OLSET-KF and the oracle estimator have the same sum of MSE.

\begin{figure}
  \centering
  \includegraphics[width=0.40\textwidth]{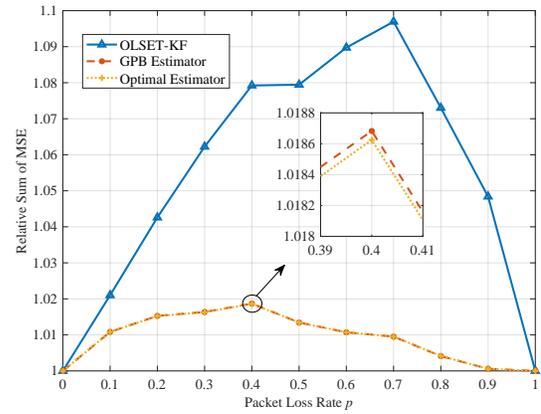}
  \caption{Relative average sum of MSE of different estimator under different packet drop rate}
  \label{fig:sim1}
\end{figure}

\section{Conclusions\label{sec:Conclusion}}

This paper studies the remote state estimation problem of linear systems with stochastic event-triggered sensor schedulers in the presence of packet drops. The conditional PDFs are computed, the optimal estimators are derived and the corresponding communication rates are analyzed. Strategies to reduce the computational complexities are discussed. However, the performance of sub-optimal estimators can only be evaluated via simulations. Sub-optimal estimators with performance guarantees are to be proposed.  

\bibliographystyle{ieeetr}

\bibliography{/Users/xuliang/Dropbox/xuliang-research/4-references/bibtex-refs_gh/references}

\end{document}